\documentclass[12pt,a4paper]{article}

\usepackage[utf8]{inputenc}
\usepackage{amssymb, amsthm, amsmath}
\usepackage[english]{babel}
\usepackage[T1]{fontenc}
\usepackage{graphicx}
\usepackage{mathtools}
\usepackage[font=small,labelfont={normal, bf}]{caption}
\usepackage{subcaption}
\usepackage{xcolor}

\usepackage[cm]{fullpage}

\usepackage{autonum}

\makeatletter
\newcommand{\@giventhatstar}[2]{#1\;\middle|\;#2)}
\newcommand{\@giventhatnostar}[3][]{#1#2\;#1|\;#3#1}
\newcommand{\giventhat}{\@ifstar\@giventhatstar\@giventhatnostar}
\makeatother

\newtheorem{theorem}{Theorem}
\newtheorem{proposition}{Proposition}
\theoremstyle{definition}
\newtheorem{definition}{Definition}
\newtheorem{corollary}{Corollary}

\title{Error of discretization of Caputo fractional derivative in weighted spaces}
\author{
\L{}ukasz P\l{}ociniczak$^*$, \and Hubert Woszczek\thanks{Faculty of Pure and Applied Mathematics, Wroclaw University of Science and Technology, Wyb. Wyspia\'nskiego 27, 50-370 Wroc{\l}aw, Poland}$^{*,}$\thanks{\underline{Corresponding author:} \texttt{hubert.woszczek@pwr.edu.pl}}
}
\date{}

\begin{document}
\maketitle

\begin{abstract}
	We establish uniform error bounds of the L1 discretization of the Caputo fractional derivative of the function from the weighted Sobolev space with weight belonging to the Mucknenhoupt class. We present how our framework works for several examples of weight, which belong to the Muckenhoupt class. As and application, we show the convergence of the L1 scheme for the Fractional ODE. Finally, we verify the theoretical results with numerical illustrations.
\end{abstract}

\textbf{Keywords: }Caputo derivative; L1 scheme; weighted Sobolev spaces; Muckenhoupt weights; uniform error bounds \\ 

\textbf{MSC: }65D25, 26A33, 46E35, 65M15

\section{Introduction}
The Caputo derivative is defined for sufficiently smooth functions by
\begin{equation}\label{eq:1.1}
D^\alpha y(t) = \frac{1}{\Gamma(1-\alpha)} \int_0^t \frac{y'(s)}{(t-s)^\alpha} ds \ , \quad 0 < \alpha < 1.
\end{equation}
Equivalence between formulations can be recovered by integrating by parts. The most prominent methods used to discretize the Caputo derivative are derived or generalized from Convolution Quadrature (utilizing the convolutional form of the Caputo derivative, \cite{Lubich2004}), Gr\"{u}nwald-Letnikov (a generalization of finite differences, \cite{Gorenflo2007}), or the L1 scheme (based on piecewise linear approximation, \cite{Stynes2022}). The latter can be expressed as
\begin{equation} \label{eq:1.2}
    \delta_\tau^\alpha y(t_n) := \frac{\tau^{-\alpha}}{\Gamma(2-\alpha)} \left( y(t_n) - b_{n-1}y(0) - \sum_{i=1}^{n-1} (b_{n-i-1} - b_{n-i})y(t_i) \right),
\end{equation}
where $b_i = (i+1)^{1-\alpha} - i^{1-\alpha}$. Through this scheme, the Caputo derivative is approximated on the discrete grid $t_n = n\tau \leq T$ (with $t_0 = 0$), where $\tau > 0$ is the time step and $T > 0$ is the maximal time. 

The L1 method has many favorable properties that make it versatile in the design of numerical methods for various applications. Due to its convenient properties, the L1 method was analyzed in various functional settings. The historically first proof of the accuracy of the discretization was obtained for functions belonging to space $C^2([0,T])$ resulting in order $2-\alpha$ (see, for example, \cite{LiCai2019}, Theorem 4.1). In this setting, it is also possible to find the explicit asymptotic form of the best error constant $\overset{\sim}{C}_{\alpha,n}\sim-\zeta(\alpha-1)/\Gamma(2-\alpha)$, where $\zeta$ is the Riemann zeta function (see \cite{Plociniczak2023}). Recently, the trunction error for the function belonging to the H\"older space $C^{k, \beta}([0,T])$, with $k=0,1$ and $0\le\beta\le1$ with $k+\beta>\alpha$ was analyzed in \cite{delTesoPłociniczak2025}. The resulting order was equal to $k+\alpha-\beta$. One can also use graded meshes to maintain an optimal convergence rate of $2-\alpha$, simultaneously relaxing the functional setting to space $C^{\alpha}([0,T])$, with $y^{(m)}\sim t^{\alpha-m}$ for $m=0,1,2$ (see \cite{Stynesetal2017, Kopteva2019}). In this paper, we establish uniform bounds on the discretization error of the L1 method for functions belonging to a weighted Sobolev space $W^{2,p}_\omega(0,T)$. To formally define this functional setting, we first introduce the weighted space $L^p$.
\begin{definition}
    Let $\omega: (0,T) \to [0, \infty)$ be a locally integrable weight function and $1 \leq p < \infty$. The weighted space $L^p$, denoted $L^p_\omega(0,T)$, is equipped with the norm
\begin{equation}
    \|y\|_{L^p_\omega} := \left( \int_0^T |y(t)|^p \, \omega(t) dt \right)^{1/p}.
\end{equation}
\end{definition}
Building upon this, we define the corresponding weighted Sobolev space.
\begin{definition}
The weighted Sobolev space $W^{s,p}_\omega(0,T)$ is defined as
\begin{equation}
    W^{s,p}_\omega(0,T) := \left\{ y \in L^p_\omega(0,T) : y', y'', \ldots, y^{(s)} \in L^p_\omega(0,T) \right\},
\end{equation}
endowed with the norm
\begin{equation} \label{eq:W1p_norm}
    \|y\|_{W^{s,p}_\omega} := \left( \int_0^T \left( \sum_{k=0}^s|y^{(k)}(t)|^p \right) \omega(t) dt \right)^{1/p}.
\end{equation}
\end{definition}
Finally, we require the weight function to satisfy a specific integrability condition, encapsulated by the Muckenhoupt class.
\begin{definition}
For $1 < p < \infty$, a weight $\omega$ belongs to the Muckenhoupt class $A_p$ if the following condition holds
\begin{equation} \label{eq:Ap_condition}
    [\omega]_{A_p} := \sup_{J \subset (0,T)} \left( \frac{1}{|J|} \int_J \omega(t) dt \right) \left( \frac{1}{|J|} \int_J \omega(t)^{-\frac{1}{p-1}} dt \right)^{p-1} < \infty,
\end{equation}
where the supremum is taken over all sub-intervals $J$ of $(0,T)$.
\end{definition}
It is a well-known phenomenon in the theory of fractional calculus that solutions to time-fractional differential equations typically exhibit a weak singularity at the initial time $t=0$, even when the source term and the initial data are smooth. For example, the solution often contains terms $y(t) \sim t^\alpha$ near the origin. While the function itself remains continuous, its derivatives become singular. Due to this singular behavior, the second derivative $y''(t)$ frequently fails to be integrable over an open interval, which means that the exact solution $y$ does not belong to the standard Sobolev space $W^{2,p}(0,T)$ or the space of smooth functions $C^2[0,T]$. Consequently, traditional error analyses that rely on the assumption $y \in C^2[0,T]$ or $y \in W^{2,p}(0,T)$ are practically restrictive and often inconsistent with the true regularity of fractional differential equations. To resolve this limitation, it is necessary to introduce a weight function $\omega(t)$ that controls the singularity near the origin. Thanks to that, we can guarantee that $y \in W^{2,p}_\omega(0,T)$. The Muckenhoupt $A_p$ class is specifically chosen here because it provides the precise harmonic analysis framework required to bound the Riemann-Liouville fractional integral operators in weighted spaces. Utilizing $A_p$ weights allows us to perform a robust numerical analysis that is entirely consistent with the physically realistic, non-smooth behavior of fractional-order models.

\section{Main results}
We start with the following proposition, which justifies the well-posedness of the Caputo derivative in our spaces. 
\begin{proposition}[Well-Definedness of the Caputo Derivative]
Let $0 < \alpha < 1$ and $1 < p < \infty$. Assume that the weight $\omega$ belongs to the Muckenhoupt class $A_p$ on $(0,T)$. Then, the Caputo fractional derivative operator $D^\alpha$ is a bounded linear operator from $W^{1,p}_\omega(0,T)$ to $L^p_\omega(0,T)$. That is, there exists a constant $C > 0$ such that for all $y \in W^{1,p}_\omega(0,T)$
\begin{equation}
    \| D^\alpha y \|_{L^p_\omega} \leq C \| y \|_{W^{1,p}_\omega}.
\end{equation}
\end{proposition}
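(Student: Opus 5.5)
The plan is to write $D^\alpha y = I^{1-\alpha} y'$, where $I^{\beta} f(t) = \frac{1}{\Gamma(\beta)}\int_0^t (t-s)^{\beta-1} f(s)\,ds$ is the Riemann--Liouville integral. The Caputo derivative is then the composition of differentiation, which is bounded from $W^{1,p}_\omega$ into $L^p_\omega$ with norm at most one, and a fractional integral. The whole statement therefore reduces to showing that $I^{1-\alpha}$ is bounded on $L^p_\omega(0,T)$ for $\omega\in A_p$. Linearity is obvious, so only this bound needs proof.

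\textbf{Pointwise maximal bound.} I will dominate $I^{1-\alpha}$ by the Hardy--Littlewood maximal function. Extend $f=y'$ by zero outside $(0,T)$ and let $Mf$ denote the (uncentered) maximal function over intervals contained in $(0,T)$, or equivalently the maximal function of the zero extension. Fix $t\in(0,T)$ and split $(0,t)$ dyadically into the pieces $E_k=\{s: 2^{-k-1}t< t-s\le 2^{-k}t\}$ for $k\ge 0$. On $E_k$ the kernel satisfies $(t-s)^{-\alpha}\le (2^{-k-1}t)^{-\alpha}$. Since $E_k\subset[t-2^{-k}t,t]$, we get
\begin{equation}
\int_{E_k}|f(s)|\,ds \le 2^{-k}t\, Mf(t).
\end{equation}
Summing over $k$ gives
\begin{equation}
|I^{1-\alpha}f(t)| \le \frac{2^\alpha}{\Gamma(1-\alpha)}\, t^{1-\alpha}\sum_{k\ge0}2^{-k(1-\alpha)}\,Mf(t)\le C_\alpha\, T^{1-\alpha} Mf(t).
\end{equation}
The geometric series converges precisely because $\alpha<1$. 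The same argument works with any interval $[t-h,t]$, so only left-sided averages are needed.

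\textbf{Weighted estimate.} I will then invoke Muckenhoupt's theorem: for $1<p<\infty$ and $\omega\in A_p$, $\|Mf\|_{L^p_\omega}\le C([\omega]_{A_p},p)\|f\|_{L^p_\omega}$. This yields
\begin{equation}
\|D^\alpha y\|_{L^p_\omega}\le C_\alpha T^{1-\alpha}C_M\|y'\|_{L^p_\omega}\le C\|y\|_{W^{1,p}_\omega}.
\end{equation}

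\textbf{Main obstacle.} The delicate step is applying Muckenhoupt's theorem on the bounded interval $(0,T)$ rather than on $\mathbb{R}$, since the $A_p$ condition is only assumed for subintervals of $(0,T)$. I would handle this in one of two ways. The first is to quote the version of the theorem for the maximal operator restricted to intervals inside a fixed interval, which is proved by the same Calder\'on--Zygmund or Vitali covering argument. The second is to extend $\omega$ to an $A_p$ weight on $\mathbb{R}$, by even reflection about $0$ and $T$ followed by periodization, which preserves the $A_p$ constant up to a factor. With the extension in hand, the truncated maximal function is dominated by the global one. A minor additional point is that $y'\in L^p_\omega$ implies $y'\in L^1_{loc}$, via H\"older's inequality and the local integrability of $\omega^{-1/(p-1)}$ given by the $A_p$ condition. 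This makes $D^\alpha y$ well defined almost everywhere, and the identity $D^\alpha y=I^{1-\alpha}y'$ then holds by the definition (\ref{eq:1.1}).
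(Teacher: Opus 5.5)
Your proof is correct. It uses the same reduction as the paper, $D^\alpha y = I^{1-\alpha}y'$, but handles the key step by a different, more self-contained route. The paper simply asserts, citing Muckenhoupt--Wheeden, that $I^{1-\alpha}$ is bounded on $L^p_\omega(0,T)$ for $\omega\in A_p$. You actually prove it: the dyadic splitting of $(0,t)$ gives the pointwise bound $|I^{1-\alpha}f(t)|\le C_\alpha T^{1-\alpha}Mf(t)$, after which only Muckenhoupt's maximal theorem is needed.

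This buys several things.
\begin{itemize}
\item The factor $T^{1-\alpha}$ shows exactly where the boundedness of the interval enters. On $(0,\infty)$ the operator $I^{1-\alpha}$ is not bounded on $L^p$ at all, by scaling.
\item As usually stated, the Muckenhoupt--Wheeden theorem is an off-diagonal $L^p\to L^q$ estimate for Riesz potentials on $\mathbb{R}^n$ with $A_{p,q}$ weights. Your domination argument is what actually justifies the diagonal bound on $(0,T)$ that the paper invokes.
\item You address the fact that $A_p$ is only assumed over subintervals of $(0,T)$, handling it by the local maximal theorem or by reflection and periodization. The paper passes over this.
\item You check that $D^\alpha y$ is well defined, which the paper also skips. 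Taking $J=(0,T)$ in the $A_p$ condition gives $\omega^{-1/(p-1)}\in L^1(0,T)$, hence $y'\in L^1(0,T)$ by H\"older, including near $s=0$.
\end{itemize}
The paper's version gains only brevity.
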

\begin{proof}
It is well-known that the Caputo derivative can be expressed as the Riemann-Liouville fractional integral of order $1-\alpha$ applied to the first derivative $y'$. Let $\beta = 1 - \alpha$. We denote the Riemann-Liouville fractional integral operator by 
\begin{equation}
    I^\beta f(t) := \frac{1}{\Gamma(\beta)} \int_0^t (t-s)^{\beta-1} f(s) ds.
\end{equation}
Thus, we can write $D^\alpha y(t) = I^{1-\alpha}(y')(t)$. To prove the proposition, it suffices to show that the operator $I^{1-\alpha}$ is bounded on the weighted space $L^p_\omega(0,T)$. It is a classical result in harmonic analysis (see, e.g., \cite{MuckenhouptWheeden1974}) that the Riemann-Liouville fractional integral operator $I^\beta$ is bounded on $L^p_\omega(0,T)$ provided that the weight $\omega$ satisfies the Muckenhoupt $A_p$ condition. Therefore, we have
\begin{equation}
    \| I^{1-\alpha} y' \|_{L^p_\omega} \leq C \| y' \|_{L^p_\omega},
\end{equation}
where $C$ is a constant depending on $p$, $\alpha$, and the $A_p$ characteristic of $\omega$. Since $\| y' \|_{L^p_\omega} \leq \| y \|_{W^{1,p}_\omega}$ by the definition of the Sobolev norm, we conclude
\begin{equation}
    \| D^\alpha y \|_{L^p_\omega} = \| I^{1-\alpha} y' \|_{L^p_\omega} \leq C \| y \|_{W^{1,p}_\omega},
\end{equation}
and the proof is complete. 
\end{proof}
Next, we state the main result.

\begin{theorem}\label{th1}
Let $\omega \in A_p$ and $y \in W^{2,p}_\omega(0,T)$. The uniform error of the L1 scheme satisfies
\begin{equation}
    |D^\alpha y(t_n) - \delta_\tau^\alpha y(t_n)| \le C \tau^{2-\alpha-1/p} \Lambda(\omega, \tau) \|y\|_{W^{2,p}_\omega},
\end{equation}
where
\begin{equation}
    \Lambda(\omega, \tau) = \sup_{0 \le j \le N-1} \tau^{-1/q} W_j = \sup_{j} \left( \frac{1}{\tau} \int_{t_j}^{t_{j+1}} \omega(s)^{-\frac{1}{p-1}} ds \right)^{\frac{p-1}{p}}.
\end{equation}
\end{theorem}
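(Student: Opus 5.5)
The argument rests on the standard representation of the L1 truncation error through the piecewise linear interpolant. Let $\Pi y$ be the piecewise linear interpolant of $y$ on the grid $\{t_j\}$. Then $\delta_\tau^\alpha y(t_n) = D^\alpha(\Pi y)(t_n)$. Integrating by parts on each subinterval, using that $e := y - \Pi y$ vanishes at every node, gives
\begin{equation}
D^\alpha y(t_n) - \delta_\tau^\alpha y(t_n) = \frac{1}{\Gamma(1-\alpha)}\sum_{j=0}^{n-1}\int_{t_j}^{t_{j+1}} \frac{e'(s)}{(t_n-s)^{\alpha}}\,ds = -\frac{\alpha}{\Gamma(1-\alpha)}\sum_{j=0}^{n-1}\int_{t_j}^{t_{j+1}} \frac{e(s)}{(t_n-s)^{1+\alpha}}\,ds .
\end{equation}
The boundary terms vanish, except at $s=t_n$. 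There one uses $|e(s)| = O(t_n-s)$, which holds because $y'$ is locally integrable on $(t_{n-1},t_n)$. This is guaranteed since $\omega^{-1/(p-1)}\in L^1_{loc}$ for $\omega\in A_p$, together with Hölder. The task is then to bound $e$ on each cell in terms of the local weighted norm of $y''$.

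The second step is the local interpolation estimate. On $I_j=[t_j,t_{j+1}]$ we have the Peano representation $e(s) = -\int_{I_j} G_j(s,r)\,y''(r)\,dr$ with $0\le G_j(s,r)\le (s-t_j)(t_{j+1}-s)/\tau \le \min(s-t_j,t_{j+1}-s)$. Hölder's inequality with weights $\omega^{1/p}$ and $\omega^{-1/p}$ yields
\begin{equation}
|e(s)| \le \frac{(s-t_j)(t_{j+1}-s)}{\tau}\, \|y''\|_{L^p_\omega(I_j)}\, W_j, \qquad W_j := \Big(\int_{I_j}\omega^{-\frac{1}{p-1}}\Big)^{\frac{p-1}{p}} \le \tau^{1/q}\Lambda(\omega,\tau),
\end{equation}
where $1/q = 1-1/p$. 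This shows that $\sup_{I_j}|e| \le C\tau^{1+1/q}\Lambda\|y''\|_{L^p_\omega}$ and also $|e(s)| \le (t_{j+1}-s)\,\tau^{1/q}\Lambda\|y''\|_{L^p_\omega}$.

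The third step sums over the cells. For $j\le n-2$ we use the sup bound:
\begin{equation}
\sum_{j\le n-2}\int_{I_j}\frac{|e|}{(t_n-s)^{1+\alpha}}\,ds \le C\tau^{2-1/p}\Lambda\|y''\|_{L^p_\omega}\int_0^{t_{n-1}}(t_n-s)^{-1-\alpha}\,ds \le C\tau^{2-1/p-\alpha}\Lambda\|y''\|_{L^p_\omega}.
\end{equation}
For the last cell $j=n-1$ we use the linear bound. This gives $\tau^{1/q}\Lambda\|y''\|\int_{I_{n-1}}(t_n-s)^{-\alpha}\,ds = C\tau^{1/q+1-\alpha}\Lambda\|y''\|$, and since $1/q+1-\alpha = 2-\alpha-1/p$ the order matches. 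Combining both contributions and using $\|y''\|_{L^p_\omega}\le\|y\|_{W^{2,p}_\omega}$ gives the claim.

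The main obstacle is justifying this computation for $y$ that is only in $W^{2,p}_\omega$. We need $y\in C^1$ on the interior of each cell with absolutely continuous derivative, so that the Peano kernel and the integration by parts are legitimate. The key point is that $A_p$ forces $\omega^{-1/(p-1)}\in L^1(0,T)$, hence $W_j<\infty$, and Hölder gives $y',y''\in L^1(0,T)$. So $y\in W^{2,1}(0,T)$, and a density argument (smooth functions are dense in $W^{2,p}_\omega$ for $A_p$ weights) passes to the limit in both sides. For the left side this uses the boundedness of $D^\alpha$ from the Proposition, together with pointwise control via $y'\in W^{1,1}$. I would check that the point evaluation at $t_n$ is continuous in this norm, which again follows from the $L^1$ embedding above.
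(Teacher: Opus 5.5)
Your argument is correct, but you organize the estimates differently from the paper. The paper never moves the derivative onto the kernel. It keeps $e'=y'-u_j$ on each cell and bounds it pointwise by $Y_j=\int_{t_j}^{t_{j+1}}|y''|$. On the non-final cells it uses the zero mean of $y'-u_j$ to replace $(t_n-s)^{-\alpha}$ by $(t_n-s)^{-\alpha}-(t_n-t_{j+1})^{-\alpha}$, which the mean value theorem bounds by $\alpha\tau(t_n-t_{j+1})^{-\alpha-1}$. It then combines the cells with a discrete H\"older inequality in $\ell^q$--$\ell^p$.

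You instead integrate by parts, estimate the zeroth-order interpolation error $e=y-\Pi y$ through its Peano kernel, and sum with an $\ell^1$--$\ell^\infty$ bound, controlling each $\|y''\|_{L^p_\omega(I_j)}$ by the global norm. On each non-final cell the two expressions are in fact equal: integrating $\int_{I_j}[(t_n-s)^{-\alpha}-(t_n-t_{j+1})^{-\alpha}]e'(s)\,ds$ by parts gives your $-\alpha\int_{I_j}(t_n-s)^{-1-\alpha}e(s)\,ds$. So the difference lies only in how the cell contributions are estimated and summed.

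Your version is the cleaner one. You integrate the kernel exactly over $[0,t_{n-1}]$, so you need neither the discrete H\"older step nor a sum-to-integral comparison. The paper writes that comparison in the wrong direction: since $(t_n-s)^{-(\alpha+1)q}$ increases in $s$, the sum dominates the integral. Its conclusion still holds because $\sum_{k\ge1}k^{-(\alpha+1)q}<\infty$. The paper's $\ell^q$--$\ell^p$ splitting could in principle keep the cellwise $W_j$, so that weight singularities far from $t_n$ are damped by the kernel. But the paper immediately replaces $W_j$ by $\tau^{1/q}\Lambda(\omega,\tau)$, so it gains nothing over your bound.

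Your regularity discussion is also more careful than the paper's, which simply asserts that $y'$ is absolutely continuous. The $A_p$ condition with $J=(0,T)$ gives $\omega^{-1/(p-1)}\in L^1(0,T)$, hence $y\in W^{2,1}(0,T)$. Once you have that, the density argument is superfluous. Indeed, $W^{2,1}(0,T)\subset C^1[0,T]$ with $y'$ absolutely continuous, so the Peano representation and the cellwise integration by parts hold directly.

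One small slip: local integrability of $y'$ does not by itself give $|e(s)|=O(t_n-s)$. What you need is boundedness of $y'$, which follows from $y''\in L^1$, or simply your own Peano bound $|e(s)|\le (t_n-s)\,W_{n-1}\|y''\|_{L^p_\omega(I_{n-1})}$.
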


\begin{proof}
We consider a uniform grid $t_n = n\tau$ for $n=0, \dots, N$, with step size $\tau = T/N$. The L1 scheme is given by replacing $y'(s)$ with the backward difference quotient $u_j$:
\begin{equation} \label{eq:L1}
    \delta_\tau^\alpha y(t_n) = \frac{1}{\Gamma(1-\alpha)} \sum_{j=0}^{n-1} u_j \int_{t_j}^{t_{j+1}} (t_n - s)^{-\alpha} ds,
\end{equation}
where $u_j = \frac{y(t_{j+1}) - y(t_j)}{\tau}$. Let us define the seminorm
\begin{equation}
    [y]_{W^{2,p}_\omega} := \|y''\|_{L^p_\omega}.
\end{equation}
Since $[y]_{W^{2,p}_\omega} \le \|y\|_{W^{2,p}_\omega}$, we will derive the bounds in terms of this seminorm. Let us denote $R_n(y) := |D^\alpha y(t_n) - \delta_\tau^\alpha y(t_n)|$. Subtracting \eqref{eq:L1} from the exact derivative, we have
\begin{equation} \label{eq:error_rep}
    R_n(y) = \frac{1}{\Gamma(1-\alpha)} \left|\sum_{j=0}^{n-1} \int_{t_j}^{t_{j+1}} (t_n - s)^{-\alpha} (y'(s) - u_j) ds \right|.
\end{equation}
Since $y'$ is an absolutely continuous function, using the Taylor series expansion with the integral form of the remainder, for all $s \in (t_j, t_{j+1})$, we have
\begin{equation}
    |y'(s) - u_j| = \left|y'(s) - \frac{1}{\tau}\int_{t_j}^{t_{j+1}} y'(\xi) d\xi\right| = \left|\tau^{-1} \int_{t_j}^{t_{j+1}} \int_{\xi}^s y''(\eta) d\eta d\xi\right| \le \int_{t_j}^{t_{j+1}} |y''(\eta)| d\eta.
\end{equation}
Let $Y_j := \int_{t_j}^{t_{j+1}} |y''(\eta)| d\eta$. Thus, $\sup_{s \in (t_j, t_{j+1})} |y'(s) - u_j| \le Y_j$. Since the only singularity occurs in the term $j = n - 1$ with $n \ge 1$, this term has to be estimated separately. By direct calculation, we have
\begin{align}
    \left|\int_{t_{n-1}}^{t_{n}} (t_n - s)^{-\alpha} (y'(s) - u_{n-1}) ds\right| &\le \int_{t_{n-1}}^{t_n} (t_n - s)^{-\alpha} |y'(s) - u_{n-1}| ds \nonumber \\
    &\le Y_{n-1} \int_{t_{n-1}}^{t_n} (t_n - s)^{-\alpha} ds = \frac{\tau^{1-\alpha}}{1-\alpha} Y_{n-1}.
\end{align}
We estimate the remaining part as follows. Since $\int_{t_j}^{t_{j+1}} (y'(s) - u_j) ds = 0$, and $(t_n - t_{j+1})^{-\alpha}$ are constant, we have
\begin{align}
    \left|\int_{t_j}^{t_{j+1}} (t_n - s)^{-\alpha} (y'(s) - u_j) ds\right| &= \left|\int_{t_j}^{t_{j+1}} \left[ (t_n - s)^{-\alpha} - (t_n - t_{j+1})^{-\alpha} \right] (y'(s) - u_j) ds \right| \nonumber \\
    &\le \int_{t_j}^{t_{j+1}} |(t_n - s)^{-\alpha} - (t_n - t_{j+1})^{-\alpha}| |y'(s) - u_j| ds.
\end{align}
Let $k(s) = (t_n-s)^{-\alpha}$. From the Mean Value Theorem, we have $|k(s) - k(t_{j+1})| = |k'(\xi)||s-t_{j+1}|$ for some $\xi\in[t_j, t_{j+1}]$. Since $k'(s) = \alpha(t_n-s)^{-\alpha-1}$, we have $\max_{\xi\in[t_j, t_{j+1}]}k'(\xi) = \alpha(t_n-t_{j+1})^{-\alpha-1}$ and $|k(s) - k(t_{j+1})| \leq\tau\alpha(t_n-t_{j+1})^{-\alpha-1}$. Hence, we have
\begin{align}
    \left|\int_{t_j}^{t_{j+1}} (t_n - s)^{-\alpha} (y'(s) - u_j) ds \right| &\le \int_{t_j}^{t_{j+1}} |(t_n - s)^{-\alpha} - (t_n - t_{j+1})^{-\alpha}| |y'(s) - u_j| ds \nonumber \\
    &\le \alpha \tau^2 (t_n - t_{j+1})^{-\alpha-1} Y_j.
\end{align}
From H\"older's inequality with conjugates $p, q$ ($1/p + 1/q = 1$), we have
\begin{align}
    Y_j &= \int_{t_j}^{t_{j+1}} |y''(s)| \omega(s)^{1/p} \omega(s)^{-1/p} ds \nonumber \\
    &\le \left( \int_{t_j}^{t_{j+1}} |y''(s)|^p \omega(s) ds \right)^{1/p} \left( \int_{t_j}^{t_{j+1}} \omega(s)^{-q/p} ds \right)^{1/q} \nonumber \\
    &= \|y''\|_{L^p_\omega(t_j, t_{j+1})} \|\omega^{-1/p}\|_{L^q(t_j, t_{j+1})}.
\end{align}
Let us denote $W_j := \|\omega^{-1/p}\|_{L^q(t_j, t_{j+1})}$. Plugging all these calculations into \eqref{eq:error_rep}, we have
\begin{equation}
    R_n(y) \le C \left[ \tau^{1-\alpha} W_{n-1} \|y''\|_{L^p_\omega(t_{n-1}, t_{n})} + \sum_{j=0}^{n-2} \tau^2 (t_n - t_{j+1})^{-\alpha-1} W_j \|y''\|_{L^p_\omega(t_j, t_{j+1})} \right].
\end{equation}
Applying H\"older's inequality to the sum in the above inequality, we have
\begin{equation}
    \sum_{j=0}^{n-2} A_j B_j \le \left( \sum_{j=0}^{n-2} A_j^q \right)^{1/q} \left( \sum_{j=0}^{n-2} B_j^p \right)^{1/p},
\end{equation}
where $A_j =\tau^2 (t_n - t_{j+1})^{-\alpha-1} W_j$ and $B_j = \|y''\|_{L^p_\omega(t_j, t_{j+1})}$. Note that $(\sum B_j^p)^{1/p} = \|y''\|_{L^p_\omega(0,t_n)} \le \|y\|_{W^{2,p}_\omega}$. Now, let us analyze the term:
\begin{equation}
    \left( \sum_{j=0}^{n-2} A_j^q \right)^{1/q} = \left( \sum_{j=0}^{n-2} \left[ \tau^2 (t_n - t_{j+1})^{-\alpha-1} W_j \right]^q \right)^{1/q}.
\end{equation}
To do this, we introduce
\begin{equation}
    \Lambda(\omega, \tau) := \sup_{0 \le j \le N-1} \tau^{-1/q} W_j = \sup_{j} \left( \frac{1}{\tau} \int_{t_j}^{t_{j+1}} \omega(s)^{-\frac{1}{p-1}} ds \right)^{\frac{p-1}{p}}.
\end{equation}
Then $W_j \le \tau^{1/q} \Lambda(\omega, \tau)$. Thus, we have
\begin{align}
    \left( \sum_{j=0}^{n-2} A_j^q \right)^{1/q} &\le \tau^2 \tau^{1/q} \Lambda(\omega, \tau) \left( \sum_{j=0}^{n-2} (t_n - t_{j+1})^{-(\alpha+1)q} \right)^{1/q} \nonumber \\
    &\le \tau^2 \tau^{1/q} \Lambda(\omega, \tau) \left( \frac{1}{\tau} \int_0^{t_n-\tau} (t_n - s)^{-(\alpha+1)q} ds \right)^{1/q} \nonumber \\
    &\le \tau^2 \Lambda(\omega, \tau) \left( \int_0^{t_n-\tau} (t_n - s)^{-(\alpha+1)q} ds \right)^{1/q} \nonumber \\
    &\le C_2 \Lambda(\omega, \tau) \tau^{2-(\alpha+1) + 1/q} = C_2 \Lambda(\omega, \tau) \tau^{2-\alpha-1/p}.
\end{align}
This concludes the proof.
\end{proof}
\begin{corollary}\label{cor1}
    Let $\omega(t) = t^\mu$. If $y \in W^{2,p}_{t^\mu}(0,T)$ and $0 \le \mu < p-1$, the discretization error is bounded by
    \begin{equation}
        |D^\alpha y - \delta_\tau^\alpha y| \le C \tau^{2 - \alpha - \frac{1+\mu}{p}} \|y\|_{W^{2,p}_{t^\mu}}.
    \end{equation}
\end{corollary}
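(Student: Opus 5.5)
The corollary should follow from Theorem~\ref{th1} once we compute $\Lambda(\omega,\tau)$ for $\omega(t)=t^\mu$. The plan has three steps: confirm the hypotheses, bound $\Lambda$ explicitly, and substitute into the theorem's estimate.

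\emph{Step 1: the weight is admissible.} We need $t^\mu\in A_p$ on $(0,T)$. Power weights $|t|^\mu$ lie in $A_p$ exactly when $-1<\mu<p-1$, so the range $0\le\mu<p-1$ is covered. If a direct argument is wanted: the dual weight is $\omega^{-1/(p-1)}=t^{-\mu/(p-1)}$, and its exponent $-\mu/(p-1)$ exceeds $-1$ precisely because $\mu<p-1$. Hence both $t^\mu$ and $t^{-\mu/(p-1)}$ are integrable near $0$. Checking the $A_p$ quotient separately on intervals $J=(a,a+h)$ with $h\ge a$ (comparable to $(0,2h)$, where an explicit computation gives a bounded constant) and with $h<a$ (where $t\sim a$ on $J$, so the quotient is $O(1)$) gives a finite $[\omega]_{A_p}$. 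Theorem~\ref{th1} therefore applies.

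\emph{Step 2: compute $\Lambda$.} We must bound $\sup_j\bigl(\tau^{-1}\int_{t_j}^{t_{j+1}} s^{-\mu/(p-1)}\,ds\bigr)^{(p-1)/p}$. Since $\mu\ge0$, the integrand is nonincreasing, so the supremum is attained at $j=0$. Writing $\gamma=\mu/(p-1)\in[0,1)$,
\[
\frac1\tau\int_0^\tau s^{-\gamma}\,ds=\frac{\tau^{-\gamma}}{1-\gamma},
\]
and therefore
\[
\Lambda(t^\mu,\tau)\le (1-\gamma)^{-(p-1)/p}\,\tau^{-\gamma(p-1)/p}=C\,\tau^{-\mu/p}.
\]

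\emph{Step 3: combine.} Substituting into Theorem~\ref{th1} gives
\[
|D^\alpha y(t_n)-\delta_\tau^\alpha y(t_n)|\le C\tau^{2-\alpha-1/p}\tau^{-\mu/p}\|y\|_{W^{2,p}_{t^\mu}}=C\,\tau^{2-\alpha-\frac{1+\mu}{p}}\|y\|_{W^{2,p}_{t^\mu}},
\]
uniformly in $n$. The constant depends on $\alpha,p,\mu,T$ and blows up as $\mu\to p-1$.

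\emph{Main obstacle.} The only nontrivial point is Step 1, verifying the $A_p$ membership. Step 2 is elementary once monotonicity puts the supremum at the first cell. I would simply cite the standard power-weight characterization for Step 1, with the two-case interval sketch above as backup.
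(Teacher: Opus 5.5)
Your proposal is correct and follows the paper's proof essentially verbatim. Monotonicity of $s^{-\mu/(p-1)}$ puts the supremum in $\Lambda$ on the first cell, and the explicit integral gives $\Lambda(t^\mu,\tau)=(1-\mu/(p-1))^{-(p-1)/p}\tau^{-\mu/p}$, which is the paper's $(p/(p-\mu q))^{1/q}\tau^{-\mu/p}$ in different notation; substituting into Theorem~\ref{th1} then finishes. Your Step~1 check that $t^\mu\in A_p$ is a valid addition that the paper leaves implicit, but it is not really the main obstacle: the proof of Theorem~\ref{th1} only uses finiteness of $\Lambda$, so the condition $\mu<p-1$ does its work through the convergence of $\int_0^\tau s^{-\mu/(p-1)}\,ds$.
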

\begin{proof}
    We calculate $\Lambda(\omega, \tau)$ for $\omega(t) = t^\mu$. We must evaluate $W_j = \|t^{-\mu/p}\|_{L^q(t_j, t_{j+1})}$. For the first interval $[0, \tau]$, we have
    \begin{equation}
        W_0 = \left( \int_0^\tau t^{-\frac{\mu q}{p}} dt \right)^{1/q}.
    \end{equation}
    Convergence requires $\frac{\mu q}{p} < 1 \iff \mu < \frac{p}{q} = p-1$. Evaluating the integral, we have
    \begin{equation}
        W_0 = \left( \frac{\tau^{1 - \mu q/p}}{1 - \mu q/p} \right)^{1/q} = \left(\frac{p}{p-\mu q}\right)^{1/q} \tau^{\frac{1}{q} - \frac{\mu}{p}}.
    \end{equation}
    Since $t^{-\mu q/p}$ is a strictly decreasing function of $t > 0$, the supremum is achieved at $j=0$. Thus, we have
    \begin{equation}
        \Lambda(t^\mu, \tau) = \left(\frac{p}{p-\mu q}\right)^{1/q} \tau^{-1/q} \tau^{\frac{1}{q} - \frac{\mu}{p}} = \left(\frac{p}{p-\mu q}\right)^{1/q} \tau^{-\frac{\mu}{p}}.
    \end{equation}
    Finally, we substitute $\Lambda(t^\mu, \tau)$ into the bound obtained in Theorem \ref{th1}, which yields the desired result.
\end{proof}
\begin{corollary}\label{cor2}
Let the Jacobi weight be given by $\omega(t) = t^\mu(T-t)^\gamma$. If $y \in W^{2,p}_\omega(0,T)$ with $0 \le \mu < p-1$ and $0 \le \gamma < p-1$, the discretization error is bounded by
\begin{equation}
    |D^\alpha y(t_n) - \delta_\tau^\alpha y(t_n)| \le C \tau^{2 - \alpha - \frac{1+\nu}{p}} \|y\|_{W^{2,p}_\omega},
\end{equation}
where $\nu = \max(\mu, \gamma)$ and $C$ is a constant independent of $\tau$.
\end{corollary}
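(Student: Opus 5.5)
The plan is to reduce Corollary \ref{cor2} to Theorem \ref{th1}. By that theorem, it suffices to show
\begin{equation}
    \Lambda(\omega,\tau) \le C\,\tau^{-\nu/p}
\end{equation}
with $C$ independent of $\tau$. Substituting this into Theorem \ref{th1} gives the factor $\tau^{2-\alpha-1/p}\tau^{-\nu/p}=\tau^{2-\alpha-\frac{1+\nu}{p}}$. (One could also confirm that $\omega\in A_p$ for $-1<\mu,\gamma<p-1$, which is the classical power-weight criterion, although the computation below only uses local integrability of $\omega^{-1/(p-1)}$.) So everything comes down to estimating
\begin{equation}
    \frac{1}{\tau}\int_{t_j}^{t_{j+1}} s^{-\mu q/p}(T-s)^{-\gamma q/p}\,ds
\end{equation}
uniformly in $j$. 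Here $q/p = 1/(p-1)$, so the exponents are $a:=\mu/(p-1)<1$ and $b:=\gamma/(p-1)<1$.

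I would split $(0,T)$ at $T/2$. On $(0,T/2]$ we have $(T-s)^{-b}\le (T/2)^{-b}$, so the weight there is bounded by a constant times $s^{-a}$. The argument of Corollary \ref{cor1} then applies verbatim. The function $s^{-a}$ is decreasing, so the average is largest on the cell $[0,\tau]$, where it equals $\tau^{-a}/(1-a)$. Symmetrically, on $[T/2,T)$ the factor $s^{-a}$ is bounded, and the substitution $s\mapsto T-s$ reduces the estimate to the same computation with exponent $b$. Because the grid is uniform with $T=N\tau$, the cells map onto cells, and the worst cell is $[t_{N-1},T]$, with average at most $C\tau^{-b}$.

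One cell may straddle $T/2$. On that cell both factors are bounded by constants depending only on $T$, provided $\tau\le T/4$, and for larger $\tau$ the claim is trivial after enlarging $C$. Hence for every $j$ the average is at most $C\max(\tau^{-a},\tau^{-b})\le C\tau^{-\max(a,b)}$ for $\tau\le T$. Raising to the power $(p-1)/p$ gives $\Lambda(\omega,\tau)\le C\tau^{-\max(\mu,\gamma)/p}=C\tau^{-\nu/p}$.

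The only delicate point is making the bound uniform in $j$ across the two regions and the middle cell. The monotonicity argument from Corollary \ref{cor1} does not apply directly, because the Jacobi weight is not monotone. The splitting at $T/2$ handles this, at the cost of constants depending on $T$, $p$, $\mu$ and $\gamma$, which is acceptable since $C$ is only required to be independent of $\tau$.
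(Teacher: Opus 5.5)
Your proof is correct and follows essentially the same route as the paper: both reduce the claim to $\Lambda(\omega,\tau)\le C\tau^{-\nu/p}$, bound the non-singular factor by $(T/2)^{-\gamma q/p}$ near $0$ and by $(T/2)^{-\mu q/p}$ near $T$, and evaluate the two boundary-cell integrals. Your monotonicity argument on each half is more careful than the paper's treatment of the interior cells, where it asserts $W_j^q\le C_3\tau$; that claim fails for $j=1$ when $\mu>0$, since there $W_1^q\sim\tau^{1-\mu q/p}$, although the paper's final bound on $\Lambda$ is unaffected.
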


\begin{proof}
We need to estimate $\Lambda(\omega, \tau)$ for the Jacobi weight $\omega(t) = t^\mu(T-t)^\gamma$. We must evaluate the integrals 
\begin{equation}
    W_j^q = \int_{t_j}^{t_{j+1}} s^{-\frac{\mu q}{p}} (T-s)^{-\frac{\gamma q}{p}} ds
\end{equation}
for $j = 0, \dots, N-1$. Assume, without loss of generality, that $\tau \le T/2$. We divide the grid intervals into three cases: the left boundary ($j=0$), the right boundary ($j=N-1$), and the interior intervals ($1 \le j \le N-2$). For the left boundary $j=0$, we have $s \in [0, \tau]$. Thus, $(T-s) \ge T/2$. This allows us to bound the integral as follows
\begin{equation}
    W_0^q \le (T/2)^{-\frac{\gamma q}{p}} \int_0^\tau s^{-\frac{\mu q}{p}} ds = C_1 \tau^{1 - \frac{\mu q}{p}},
\end{equation}
which converges since $\mu < p-1 \iff \frac{\mu q}{p} < 1$.

For the right boundary $j=N-1$, we have $s \in [T-\tau, T]$, which implies $s \ge T/2$. Similarly, we obtain
\begin{equation}
    W_{N-1}^q \le (T/2)^{-\frac{\mu q}{p}} \int_{T-\tau}^T (T-s)^{-\frac{\gamma q}{p}} ds = C_2 \tau^{1 - \frac{\gamma q}{p}},
\end{equation}
which converges since $\gamma < p-1$. For the interior intervals ($1 \le j \le N-2$), both $s$ and $T-s$ are bounded away from zero by at least $\tau$, ensuring that the singularity is avoided, and $W_j^q \le C_3 \tau$. 

Since we are concerned with the limit as $\tau \to 0$, and the fractional exponents satisfy $1 - \frac{\mu q}{p} < 1$ and $1 - \frac{\gamma q}{p} < 1$, the boundary terms dominate the interior terms. Taking the $q$-th root, we obtain that for all $j$,
\begin{equation}
    W_j \le C \max\left( \tau^{\frac{1}{q} - \frac{\mu}{p}}, \tau^{\frac{1}{q} - \frac{\gamma}{p}} \right) = C \tau^{\frac{1}{q}} \tau^{-\frac{\max(\mu, \gamma)}{p}}.
\end{equation}
Letting $\nu = \max(\mu, \gamma)$, we calculate the supremum
\begin{equation}
    \Lambda(\omega, \tau) = \sup_j \tau^{-1/q} W_j \le C \tau^{-\frac{\nu}{p}}.
\end{equation}
Substituting this bound into Theorem \ref{th1} yields the final estimate.
\end{proof}

\begin{corollary}\label{cor3}
Let the weight be given by $\omega(t) = \left(\ln \frac{eT}{t}\right)^{-\mu}$ with $\mu > 0$. If $y \in W^{2,p}_\omega(0,T)$, the discretization error is bounded by
\begin{equation}
    |D^\alpha y(t_n) - \delta_\tau^\alpha y(t_n)| \le C \tau^{2 - \alpha - 1/p} \left(\ln \frac{eT}{\tau}\right)^{\frac{\mu}{p}} \|y\|_{W^{2,p}_\omega},
\end{equation}
where $C$ is a constant independent of $\tau$.
\end{corollary}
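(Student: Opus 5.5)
The plan is to apply Theorem \ref{th1} directly, so the only work is to estimate $\Lambda(\omega,\tau)$ for $\omega(t)=\left(\ln\frac{eT}{t}\right)^{-\mu}$. Since $\omega^{-1/(p-1)}(s)=\left(\ln\frac{eT}{s}\right)^{\mu/(p-1)}$, we must bound
\begin{equation}
    \frac{1}{\tau}\int_{t_j}^{t_{j+1}} \left(\ln\frac{eT}{s}\right)^{\frac{\mu}{p-1}} ds
\end{equation}
uniformly in $j$. For $s\in(0,T)$ we have $\ln\frac{eT}{s}>1$, and this quantity is decreasing in $s$. Hence the integrand is a decreasing function of $s$, the average over $[t_j,t_{j+1}]$ is largest for $j=0$, and
\begin{equation}
    \Lambda(\omega,\tau)=\left(\frac{1}{\tau}\int_0^\tau \left(\ln\frac{eT}{s}\right)^{\kappa}ds\right)^{\frac{p-1}{p}},\qquad \kappa:=\frac{\mu}{p-1}>0.
\end{equation}

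The main step is to show that $\frac{1}{\tau}\int_0^\tau (\ln\frac{eT}{s})^\kappa ds\le C(\ln\frac{eT}{\tau})^\kappa$ with $C$ depending only on $\kappa$. I would substitute $s=\tau r$ with $r\in(0,1)$, so that $\ln\frac{eT}{s}=\ln\frac{eT}{\tau}+\ln\frac1r$. Writing $L=\ln\frac{eT}{\tau}\ge 1$ (using $\tau\le T$), we obtain
\begin{equation}
    \int_0^1 \left(L+\ln\tfrac1r\right)^\kappa dr \le \max(1,2^{\kappa-1})\left(L^\kappa+\int_0^1\left(\ln\tfrac1r\right)^\kappa dr\right)=\max(1,2^{\kappa-1})\left(L^\kappa+\Gamma(\kappa+1)\right)\le C_\kappa L^\kappa,
\end{equation}
where the last inequality uses $L\ge1$. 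This is the only step that needs care, namely handling the integrable logarithmic singularity at $0$; the substitution $r=e^{-x}$ turns it into a Gamma integral.

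Raising to the power $(p-1)/p$ gives $\Lambda(\omega,\tau)\le C\left(\ln\frac{eT}{\tau}\right)^{\kappa(p-1)/p}=C\left(\ln\frac{eT}{\tau}\right)^{\mu/p}$. Substituting this into Theorem \ref{th1} yields the claimed estimate. Theorem \ref{th1} requires $\omega\in A_p$; I would take this as given for logarithmic weights, since they are standard examples of $A_p$ weights. The estimate itself uses only the finiteness of $\Lambda$, which we have just established.
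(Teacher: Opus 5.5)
Your proposal is correct and follows the paper's proof essentially step by step. Both arguments use the monotonicity of $\left(\ln\frac{eT}{s}\right)^{\mu/(p-1)}$ to reduce to $j=0$, the substitution $s=\tau r$, the inequality $(A+B)^\kappa\le C_\kappa(A^\kappa+B^\kappa)$, and the Gamma integral; your use of $L=\ln\frac{eT}{\tau}\ge 1$ to absorb $\Gamma(\kappa+1)$ is slightly tidier than the paper's ``for sufficiently small $\tau$'' remark, since it gives a constant valid for every $\tau\le T$.
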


\begin{proof}
We must evaluate $\Lambda(\omega, \tau)$ for the logarithmic weight $\omega(t) = \left(\ln \frac{eT}{t}\right)^{-\mu}$. To do this, we calculate $W_j = \|\omega^{-1/p}\|_{L^q(t_j, t_{j+1})}$, which requires evaluating the integrals
\begin{equation}
    W_j^q = \int_{t_j}^{t_{j+1}} \left(\ln \frac{eT}{s}\right)^{\frac{\mu q}{p}} ds.
\end{equation}
Because the integrand $\left(\ln \frac{eT}{s}\right)^{\frac{\mu q}{p}}$ is a strictly decreasing function of $s$ in the interval $(0, T]$, the supremum of these integrals over any subinterval of length $\tau$ occurs at the first interval $[0, \tau]$. Thus, we have
\begin{equation}
    W_0^q = \int_0^\tau \left(\ln \frac{eT}{s}\right)^{\frac{\mu q}{p}} ds.
\end{equation}
To evaluate this, we use the substitution $s = \tau x$, which gives $ds = \tau dx$. The integral becomes
\begin{equation}
    W_0^q = \tau \int_0^1 \left(\ln \frac{eT}{\tau x}\right)^{\frac{\mu q}{p}} dx = \tau \int_0^1 \left(\ln \frac{eT}{\tau} - \ln x\right)^{\frac{\mu q}{p}} dx.
\end{equation}
Let $\beta = \frac{\mu q}{p} > 0$. Using the standard algebraic inequality $(A+B)^\beta \le C_\beta(A^\beta + B^\beta)$ for positive $A$ and $B$, we can bound the integral as follows
\begin{equation}
    W_0^q \le C_\beta \tau \int_0^1 \left[ \left(\ln \frac{eT}{\tau}\right)^\beta + (-\ln x)^\beta \right] dx.
\end{equation}
Note that the integral of the second term is the Gamma function, $\int_0^1 (-\ln x)^\beta dx = \Gamma(\beta+1)$, which is a finite constant. Therefore, we obtain
\begin{equation}
    W_0^q \le C_\beta \tau \left(\ln \frac{eT}{\tau}\right)^\beta + C_\beta \tau \Gamma(\beta+1).
\end{equation}
For sufficiently small step sizes $\tau$, the term containing the logarithm dominates. Hence, there exists a constant $C > 0$ such that:
\begin{equation}
    W_0^q \le C \tau \left(\ln \frac{eT}{\tau}\right)^{\frac{\mu q}{p}}.
\end{equation}
Taking the $q$-th root yields
\begin{equation}
    W_0 \le C^{1/q} \tau^{1/q} \left(\ln \frac{eT}{\tau}\right)^{\frac{\mu}{p}}.
\end{equation}
Finally, we compute the supremum term
\begin{equation}
    \Lambda(\omega, \tau) = \sup_j \tau^{-1/q} W_j = \tau^{-1/q} W_0 \le \tilde{C} \left(\ln \frac{eT}{\tau}\right)^{\frac{\mu}{p}}.
\end{equation}
Substituting this bound for $\Lambda(\omega, \tau)$ into the general error estimate in Theorem \ref{th1} yields the claim.
\end{proof}

\section{Application to a Fractional Differential Equation}
To demonstrate the versatility of our truncation error bound, we apply the L1 scheme to a standard linear fractional initial value problem
\begin{equation}\label{eq:fode}
    D^\alpha y(t) + \lambda y(t) = f(t), \quad t \in (0,T],
\end{equation}
with the initial condition $y(0) = y_0$, where $\lambda \ge 0$ and $f$ is a source term assumed to be continuous. The corresponding numerical scheme is obtained by evaluating \eqref{eq:fode} at $t_n$ and replacing the Caputo derivative with the L1 approximation $\delta_\tau^\alpha$. That is,
\begin{equation} \label{eq:fode_discrete}
    \delta_\tau^\alpha Y_n + \lambda Y_n = f(t_n), \quad n = 1, 2, \dots, N,
\end{equation}
with $Y_0 = y_0$. Here, $Y_n$ denotes the numerical approximation of $y(t_n)$. Set $e_n = y(t_n) - Y_n$ to represent the global discretization error.

Combining the discrete stability of the L1 scheme with our main result, we obtain the following global convergence bound.
\begin{theorem}[Convergence for the Fractional ODE]
Suppose that the exact solution to \eqref{eq:fode} satisfies $y \in W^{2,p}_\omega(0,T)$ with $\omega \in A_p$. Then, there exists a constant $C > 0$, independent of $\tau$, such that the global error satisfies
\begin{equation}
    \max_{1 \le n \le N} |e_n| \le C \tau^{2-\alpha-1/p} \Lambda(\omega, \tau) \|y\|_{W^{2,p}_\omega}.
\end{equation}
\end{theorem}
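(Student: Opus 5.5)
We argue through the error equation together with a discrete stability estimate for the L1 operator. Evaluating \eqref{eq:fode} at $t_n$ and subtracting \eqref{eq:fode_discrete}, and using linearity of $\delta_\tau^\alpha$, we obtain
\begin{equation}
    \delta_\tau^\alpha e_n + \lambda e_n = r_n, \qquad r_n := \delta_\tau^\alpha y(t_n) - D^\alpha y(t_n), \qquad e_0 = 0.
\end{equation}
Theorem \ref{th1} gives the uniform bound $|r_n| \le C\tau^{2-\alpha-1/p}\Lambda(\omega,\tau)\|y\|_{W^{2,p}_\omega} =: \varepsilon$ for all $1\le n\le N$. The task is therefore to show that the discrete problem maps a uniformly bounded residual to a uniformly bounded solution, with a constant independent of $\tau$.

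To see this, write the scheme explicitly using \eqref{eq:1.2}. Setting $\kappa := \tau^{-\alpha}/\Gamma(2-\alpha)$ and using $e_0=0$, we get
\begin{equation}
    (\kappa + \lambda) e_n = \kappa\sum_{i=1}^{n-1}(b_{n-i-1}-b_{n-i})e_i + r_n .
\end{equation}
The coefficients $b_i$ are positive and strictly decreasing, because $x\mapsto x^{1-\alpha}$ is concave. Hence the weights $b_{n-i-1}-b_{n-i}$ are positive, and they telescope to $\sum_{i=1}^{n-1}(b_{n-i-1}-b_{n-i}) = b_0 - b_{n-1} = 1 - b_{n-1}$.

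We now prove by induction that $|e_n| \le \varepsilon/(\kappa b_{n-1})$. Assuming the bound for $i<n$, and noting that $b_{i-1}\ge b_{n-1}$ for $i\le n$, we have $|e_i| \le \varepsilon/(\kappa b_{n-1})$. Using $\lambda\ge 0$,
\begin{equation}
    \kappa|e_n| \le \kappa(1-b_{n-1})\frac{\varepsilon}{\kappa b_{n-1}} + \varepsilon = \frac{\varepsilon}{b_{n-1}},
\end{equation}
which closes the induction.

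The main obstacle is converting this into a $\tau$-independent constant. We have $b_{n-1} = n^{1-\alpha}-(n-1)^{1-\alpha} \ge (1-\alpha)n^{-\alpha}$, so
\begin{equation}
    \frac{1}{\kappa b_{n-1}} \le \frac{\Gamma(2-\alpha)}{1-\alpha}\,\tau^\alpha n^\alpha = \Gamma(1-\alpha)\, t_n^\alpha \le \Gamma(1-\alpha)\,T^\alpha .
\end{equation}
This is the standard stability argument for the L1 scheme. It yields $\max_n|e_n| \le \Gamma(1-\alpha)T^\alpha\varepsilon$, which is the claim.

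The only subtlety is to keep the bound in the weaker form $\varepsilon/(\kappa b_{n-1})$ rather than $\varepsilon/\kappa$. The cruder form loses a factor $\tau^{-\alpha}$ and would destroy the rate, so the induction hypothesis must be chosen with the $n$-dependent denominator $b_{n-1}$.
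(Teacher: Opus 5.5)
Your proof is correct and has the same skeleton as the paper's: the error equation $\delta_\tau^\alpha e_n+\lambda e_n=r_n$ with $e_0=0$, the uniform bound on $r_n$ from Theorem~\ref{th1}, and a stability estimate for the discrete operator. The difference lies in the stability step. The paper does not prove it; it cites unconditional stability of the L1 scheme and a discrete fractional Gr\"onwall inequality to get $\max_n|e_n|\le \frac{T^\alpha}{\Gamma(1+\alpha)}\max_n|R_n(y)|$. You derive a stability bound directly from the structure of the scheme:
\begin{itemize}
\item positivity and monotonicity of the $b_i$;
\item the telescoping identity $\sum_{i=1}^{n-1}(b_{n-i-1}-b_{n-i})=1-b_{n-1}$;
\item an induction on the $n$-dependent hypothesis $|e_n|\le\varepsilon/(\kappa b_{n-1})$, which is exactly where $\lambda\ge 0$ is used;
\item the mean-value bound $b_{n-1}\ge(1-\alpha)n^{-\alpha}$.
\end{itemize}
Each of these steps checks out. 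Your route is elementary and self-contained, and it makes transparent why the constant is independent of $\tau$. The cost is a worse constant, $\Gamma(1-\alpha)T^\alpha$ instead of $T^\alpha/\Gamma(1+\alpha)$. Since $\Gamma(1-\alpha)\Gamma(1+\alpha)=\pi\alpha/\sin(\pi\alpha)\ge 1$, yours is always larger, and it blows up as $\alpha\to1^-$. The cited Gr\"onwall-type bound reproduces the sharp constant of the continuous problem $e=I^\alpha r$, and it is the more robust tool for variants such as graded meshes or nonlinear terms. The theorem as stated only asks for some $\tau$-independent $C$, so your bound suffices.

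One small correction to your closing remark. $\varepsilon/\kappa$ is not a cruder bound but a \emph{stronger} one, and it is false in general. Already for $\lambda=0$ and $r_1=r_2=\varepsilon$ one gets $e_2=(2-b_1)\varepsilon/\kappa>\varepsilon/\kappa$, which is why that hypothesis cannot be propagated. Nothing in your proof depends on this remark.
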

\begin{proof}
Evaluating the exact equation \eqref{eq:fode} at $t_n$ gives
\begin{equation} \label{eq:fode_exact}
    D^\alpha y(t_n) + \lambda y(t_n) = f(t_n).
\end{equation}
Now, we subtract the numerical scheme \eqref{eq:fode_discrete} from \eqref{eq:fode_exact} to obtain
\begin{equation}
    (D^\alpha y(t_n) - \delta_\tau^\alpha Y_n) + \lambda e_n = 0.
\end{equation}
By adding and subtracting $\delta_\tau^\alpha y(t_n)$, and using the linearity of the operator $\delta_\tau^\alpha$, we further have
\begin{equation}
    \delta_\tau^\alpha e_n + \lambda e_n = - R_n(y),
\end{equation}
where $R_n(y) = D^\alpha y(t_n) - \delta_\tau^\alpha y(t_n)$ is the truncation error analyzed in Theorem \ref{th1}. Because $\lambda \ge 0$, it is a known property of the L1 scheme (see, e.g., \cite{LiCai2019}) that the discrete operator is unconditionally stable. Consequently, the global error is governed by the maximum truncation error through the discrete fractional Gr\"onwall inequality, which gives
\begin{equation}
    \max_{1 \le n \le N} |e_n| \le \frac{t_N^\alpha}{\Gamma(1+\alpha)} \max_{1 \le n \le N} |R_n(y)|.
\end{equation}
Applying the uniform bound for $R_n(y)$ obtained in Theorem \ref{th1}, and noting that $t_N = T$, we deduce
\begin{equation}
    \max_{1 \le n \le N} |e_n| \le \frac{T^\alpha}{\Gamma(1+\alpha)} C \tau^{2-\alpha-1/p} \Lambda(\omega, \tau) \|y\|_{W^{2,p}_\omega}.
\end{equation}
Setting $C = \frac{T^\alpha C}{\Gamma(1+\alpha)}$ concludes the proof.
\end{proof}

\section{Numerical experiments}

The order of convergence of the L1 scheme can be estimated via the extrapolation, that is
\begin{equation}
    \text{order} \approx \log_2\frac{\max_{t\in[0,T]}|\delta_\tau^\alpha y(t) - \delta_{\tau/2}^\alpha(t)|}{\max_{t\in[0,T]}|\delta_{\tau/2}^\alpha y(t) - \delta_{\tau/4}^\alpha(t)|}.
\end{equation}
To verify the results of Corollary \ref{cor1}, we choose the following
\begin{equation}\label{eq:test1}
    y_{\kappa}(t) = t^{\kappa},
\end{equation}
where $\kappa = 2 - \alpha - \frac{1+\mu}{p} + 0.001$. In Table \ref{tab:tab1} we have gathered the results of our computations rounded to the fourth decimal place. We immediately observe that the estimated order of convergence agrees with the theoretical value $2 - \alpha - \frac{1+\mu}{p} $ exceptionally well for all the choices of $\alpha$, $p$, and $\mu$.

To check the results of Corollary \ref{cor2}, we choose the following
\begin{equation}\label{eq:test2}
    y_{\rho_0, \rho_T }(t) = t^{\rho_0} + (T-t)^{\rho_T} - T^{\rho_T},
\end{equation}
where $\rho_0 = 2 - \frac{1+\mu}{p} + 0.001$, $\rho_T = 2 - \frac{1+\gamma}{p} + 0.001$. In Table \ref{tab:tab2} we have gathered the results of our computations rounded to the fourth decimal place. We immediately observe that the estimated order of convergence agrees with the theoretical value $2 - \alpha - \frac{1+\mu}{p}$ exceptionally well for all the choices of $\alpha$, $p$, $\mu$ and $\gamma$.

To verify the results of Corollary \ref{cor3}, we choose the following
\begin{equation}\label{eq:test3}
    y_{\rho, \theta}(t) = t^{\rho}\left(\ln\frac{eT}{t}\right)^\theta
\end{equation}
where $\rho= 2 - \frac{1}{p}$, $\theta =\frac{\mu-1}{p} - 0.001$. In Table \ref{tab:tab3} we have gathered the results of our computations rounded to the fourth decimal place. We immediately observe that the estimated order of convergence agrees well with the theoretical value $2 - \alpha - \frac{1+\mu}{p} $ for not all choices of $\alpha$, $p$, and $\mu$. To solve this issue, we also estimate order adjusted to logarithmic prefactor using formula
\begin{equation}\label{eq:logadjusted}
\text{order}_{\text{log}} \approx \log_2 \left( \frac{\max_{t\in[0,T]}|\delta_\tau^\alpha y(t) - \delta_{\tau/2}^\alpha y(t)|}{\max_{t\in[0,T]}|\delta_{\tau/2}^\alpha y(t) - \delta_{\tau/4}^\alpha y(t)|} \cdot \left[ \frac{\ln\left(\frac{2eT}{\tau}\right)}{\ln\left(\frac{eT}{\tau}\right)} \right]^{\frac{\mu}{p}} \right).
\end{equation}
In Table \ref{tab:tab4} we have gathered the results of our computations rounded to the fourth decimal place. We observe that the results are better in problematic cases.
\begin{table}[h]
\centering
\begin{tabular}{cccccc}
\hline
$\alpha$ & $p$ & $\mu$ & $\kappa$ & \textbf{Estimated order} & \textbf{Theoretical order} \\
 & & & &  & $(2 - \alpha - \frac{1+\mu}{p})$ \\
\hline
0.1 & 1.5 & 0.00 & 1.334 & \textbf{1.234} & 1.233 \\
0.1 & 1.5 & 0.40 & 1.068 & \textbf{0.968} & 0.967 \\
0.1 & 3.0 & 0.00 & 1.668 & \textbf{1.568} & 1.567 \\
0.1 & 3.0 & 1.60 & 1.134 & \textbf{1.034} & 1.033 \\
\hline
0.3 & 1.5 & 0.00 & 1.334 & \textbf{1.034} & 1.033 \\
0.3 & 1.5 & 0.40 & 1.068 & \textbf{0.768} & 0.767 \\
0.3 & 3.0 & 0.00 & 1.668 & \textbf{1.368} & 1.367 \\
0.3 & 3.0 & 1.60 & 1.134 & \textbf{0.834} & 0.833 \\
\hline
0.5 & 1.5 & 0.00 & 1.334 & \textbf{0.834} & 0.833 \\
0.5 & 1.5 & 0.40 & 1.068 & \textbf{0.568} & 0.567 \\
0.5 & 3.0 & 0.00 & 1.668 & \textbf{1.168} & 1.167 \\
0.5 & 3.0 & 1.60 & 1.134 & \textbf{0.634} & 0.633 \\
\hline
0.6 & 1.5 & 0.00 & 1.334 & \textbf{0.734} & 0.733 \\
0.6 & 1.5 & 0.40 & 1.068 & \textbf{0.468} & 0.467 \\
0.6 & 3.0 & 0.00 & 1.668 & \textbf{1.068} & 1.067 \\
0.6 & 3.0 & 1.60 & 1.134 & \textbf{0.534} & 0.533 \\
\hline
0.9 & 1.5 & 0.00 & 1.334 & \textbf{0.434} & 0.433 \\
0.9 & 1.5 & 0.40 & 1.068 & \textbf{0.168} & 0.167 \\
0.9 & 3.0 & 0.00 & 1.668 & \textbf{0.768} & 0.767 \\
0.9 & 3.0 & 1.60 & 1.134 & \textbf{0.234} & 0.233 \\
\hline
\end{tabular}
\caption{Estimated orders of convergence of the L1 scheme for various choices of $\alpha$, $p$ and $\mu$ with $T = 1$. The test function is taken to be \eqref{eq:test1}. The base of calculations is $\tau = 2^{-10}$}
\label{tab:tab1}
\end{table}

\begin{table}[h]
\centering
\begin{tabular}{ccccccccc}
\hline
$\alpha$ & $p$ & $\mu$ & $\gamma$ & $\rho_0$ & $\rho_T$ & $\nu = \max(\mu,\gamma)$ & \textbf{Estimated order} & \textbf{Theoretical order} \\
 & & & & & & & & $(2 - \alpha - \frac{1+\nu}{p})$ \\
\hline
0.1 & 1.5 & 0.00 & 0.00 & 1.334 & 1.334 & 0.00 & \textbf{1.238} & 1.233 \\
0.1 & 1.5 & 0.40 & 0.10 & 1.068 & 1.268 & 0.40 & \textbf{0.969} & 0.967 \\
0.1 & 1.5 & 0.10 & 0.40 & 1.268 & 1.068 & 0.40 & \textbf{0.970} & 0.967 \\
0.1 & 1.5 & 0.25 & 0.25 & 1.168 & 1.168 & 0.25 & \textbf{1.069} & 1.067 \\
0.1 & 3.0 & 0.00 & 0.00 & 1.668 & 1.668 & 0.00 & \textbf{1.589} & 1.567 \\
0.1 & 3.0 & 1.60 & 0.40 & 1.134 & 1.534 & 1.60 & \textbf{1.036} & 1.033 \\
0.1 & 3.0 & 0.40 & 1.60 & 1.534 & 1.134 & 1.60 & \textbf{1.039} & 1.033 \\
0.1 & 3.0 & 1.00 & 1.00 & 1.334 & 1.334 & 1.00 & \textbf{1.238} & 1.233 \\
\hline
0.3 & 1.5 & 0.00 & 0.00 & 1.334 & 1.334 & 0.00 & \textbf{1.037} & 1.033 \\
0.3 & 1.5 & 0.40 & 0.10 & 1.068 & 1.268 & 0.40 & \textbf{0.769} & 0.767 \\
0.3 & 1.5 & 0.10 & 0.40 & 1.268 & 1.068 & 0.40 & \textbf{0.769} & 0.767 \\
0.3 & 1.5 & 0.25 & 0.25 & 1.168 & 1.168 & 0.25 & \textbf{0.868} & 0.867 \\
0.3 & 3.0 & 0.00 & 0.00 & 1.668 & 1.668 & 0.00 & \textbf{1.388} & 1.367 \\
0.3 & 3.0 & 1.60 & 0.40 & 1.134 & 1.534 & 1.60 & \textbf{0.836} & 0.833 \\
0.3 & 3.0 & 0.40 & 1.60 & 1.534 & 1.134 & 1.60 & \textbf{0.838} & 0.833 \\
0.3 & 3.0 & 1.00 & 1.00 & 1.334 & 1.334 & 1.00 & \textbf{1.037} & 1.033 \\
\hline
0.5 & 1.5 & 0.00 & 0.00 & 1.334 & 1.334 & 0.00 & \textbf{0.837} & 0.833 \\
0.5 & 1.5 & 0.40 & 0.10 & 1.068 & 1.268 & 0.40 & \textbf{0.569} & 0.567 \\
0.5 & 1.5 & 0.10 & 0.40 & 1.268 & 1.068 & 0.40 & \textbf{0.569} & 0.567 \\
0.5 & 1.5 & 0.25 & 0.25 & 1.168 & 1.168 & 0.25 & \textbf{0.668} & 0.667 \\
0.5 & 3.0 & 0.00 & 0.00 & 1.668 & 1.668 & 0.00 & \textbf{1.187} & 1.167 \\
0.5 & 3.0 & 1.60 & 0.40 & 1.134 & 1.534 & 1.60 & \textbf{0.636} & 0.633 \\
0.5 & 3.0 & 0.40 & 1.60 & 1.534 & 1.134 & 1.60 & \textbf{0.637} & 0.633 \\
0.5 & 3.0 & 1.00 & 1.00 & 1.334 & 1.334 & 1.00 & \textbf{0.837} & 0.833 \\
\hline
0.6 & 1.5 & 0.00 & 0.00 & 1.334 & 1.334 & 0.00 & \textbf{0.736} & 0.733 \\
0.6 & 1.5 & 0.40 & 0.10 & 1.068 & 1.268 & 0.40 & \textbf{0.469} & 0.467 \\
0.6 & 1.5 & 0.10 & 0.40 & 1.268 & 1.068 & 0.40 & \textbf{0.469} & 0.467 \\
0.6 & 1.5 & 0.25 & 0.25 & 1.168 & 1.168 & 0.25 & \textbf{0.568} & 0.567 \\
0.6 & 3.0 & 0.00 & 0.00 & 1.668 & 1.668 & 0.00 & \textbf{1.086} & 1.067 \\
0.6 & 3.0 & 1.60 & 0.40 & 1.134 & 1.534 & 1.60 & \textbf{0.536} & 0.533 \\
0.6 & 3.0 & 0.40 & 1.60 & 1.534 & 1.134 & 1.60 & \textbf{0.537} & 0.533 \\
0.6 & 3.0 & 1.00 & 1.00 & 1.334 & 1.334 & 1.00 & \textbf{0.736} & 0.733 \\
\hline
0.9 & 1.5 & 0.00 & 0.00 & 1.334 & 1.334 & 0.00 & \textbf{0.436} & 0.433 \\
0.9 & 1.5 & 0.40 & 0.10 & 1.068 & 1.268 & 0.40 & \textbf{0.169} & 0.167 \\
0.9 & 1.5 & 0.10 & 0.40 & 1.268 & 1.068 & 0.40 & \textbf{0.169} & 0.167 \\
0.9 & 1.5 & 0.25 & 0.25 & 1.168 & 1.168 & 0.25 & \textbf{0.268} & 0.267 \\
0.9 & 3.0 & 0.00 & 0.00 & 1.668 & 1.668 & 0.00 & \textbf{0.785} & 0.767 \\
0.9 & 3.0 & 1.60 & 0.40 & 1.134 & 1.534 & 1.60 & \textbf{0.236} & 0.233 \\
0.9 & 3.0 & 0.40 & 1.60 & 1.534 & 1.134 & 1.60 & \textbf{0.236} & 0.233 \\
0.9 & 3.0 & 1.00 & 1.00 & 1.334 & 1.334 & 1.00 & \textbf{0.436} & 0.433 \\
\hline
\end{tabular}
\caption{Estimated orders of convergence of the L1 scheme for various choices of $\alpha$, $p$, $\mu$ and $\gamma$ with $T = 1$. The test function is taken to be \eqref{eq:test2}. The base of calculations is $\tau = 2^{-10}$.}
\label{tab:tab2}
\end{table}

\begin{table}[h]
\centering
\begin{tabular}{ccccccc}
\hline
$\alpha$ & $p$ & $\mu$ & $\rho$ & $\theta$ & \textbf{Estimated order} & \textbf{Theoretical order} \\
 & & & &  &  & $(2 - \alpha - 1/p)$ \\
\hline
0.1 & 1.5 & 0.5 & 1.333 & -0.334 & \textbf{1.280} & 1.233 \\
0.1 & 1.5 & 2.0 & 1.333 & 0.666 & \textbf{1.132} & 1.233 \\
0.1 & 3.0 & 0.5 & 1.667 & -0.168 & \textbf{1.588} & 1.567 \\
0.1 & 3.0 & 2.0 & 1.667 & 0.332 & \textbf{1.525} & 1.567 \\
\hline
0.3 & 1.5 & 0.5 & 1.333 & -0.334 & \textbf{1.080} & 1.033 \\
0.3 & 1.5 & 2.0 & 1.333 & 0.666 & \textbf{0.932} & 1.033 \\
0.3 & 3.0 & 0.5 & 1.667 & -0.168 & \textbf{1.388} & 1.367 \\
0.3 & 3.0 & 2.0 & 1.667 & 0.332 & \textbf{1.325} & 1.367 \\
\hline
0.5 & 1.5 & 0.5 & 1.333 & -0.334 & \textbf{0.880} & 0.833 \\
0.5 & 1.5 & 2.0 & 1.333 & 0.666 & \textbf{0.732} & 0.833 \\
0.5 & 3.0 & 0.5 & 1.667 & -0.168 & \textbf{1.188} & 1.167 \\
0.5 & 3.0 & 2.0 & 1.667 & 0.332 & \textbf{1.125} & 1.167 \\
\hline
0.6 & 1.5 & 0.5 & 1.333 & -0.334 & \textbf{0.780} & 0.733 \\
0.6 & 1.5 & 2.0 & 1.333 & 0.666 & \textbf{0.632} & 0.733 \\
0.6 & 3.0 & 0.5 & 1.667 & -0.168 & \textbf{1.088} & 1.067 \\
0.6 & 3.0 & 2.0 & 1.667 & 0.332 & \textbf{1.025} & 1.067 \\
\hline
0.9 & 1.5 & 0.5 & 1.333 & -0.334 & \textbf{0.480} & 0.433 \\
0.9 & 1.5 & 2.0 & 1.333 & 0.666 & \textbf{0.332} & 0.433 \\
0.9 & 3.0 & 0.5 & 1.667 & -0.168 & \textbf{0.788} & 0.767 \\
0.9 & 3.0 & 2.0 & 1.667 & 0.332 & \textbf{0.725} & 0.767 \\
\hline
\end{tabular}
\caption{Estimated orders of convergence of the L1 scheme for various choices of $\alpha$, $p$ and $\mu$ with $T = 1$. The test function is taken to be \eqref{eq:test3}. The base of calculations is $\tau = 2^{-10}$.}
\label{tab:tab3}
\end{table}

\begin{table}[h]
\centering
\begin{tabular}{ccccccc}
\hline
$\alpha$ & $p$ & $\mu$ & $\rho$ & $\theta$ & \textbf{Estimated order} & \textbf{Theoretical order} \\
 & & & & & & $(2 - \alpha - 1/p)$ \\
\hline
0.1 & 1.5 & 0.5 & 1.333 & -0.334 & \textbf{1.317} & 1.233 \\
0.1 & 1.5 & 2.0 & 1.333 & 0.666 & \textbf{1.281} & 1.233 \\
0.1 & 3.0 & 0.5 & 1.667 & -0.168 & \textbf{1.606} & 1.567 \\
0.1 & 3.0 & 2.0 & 1.667 & 0.332 & \textbf{1.599} & 1.567 \\
\hline
0.3 & 1.5 & 0.5 & 1.333 & -0.334 & \textbf{1.117} & 1.033 \\
0.3 & 1.5 & 2.0 & 1.333 & 0.666 & \textbf{1.081} & 1.033 \\
0.3 & 3.0 & 0.5 & 1.667 & -0.168 & \textbf{1.406} & 1.367 \\
0.3 & 3.0 & 2.0 & 1.667 & 0.332 & \textbf{1.399} & 1.367 \\
\hline
0.5 & 1.5 & 0.5 & 1.333 & -0.334 & \textbf{0.917} & 0.833 \\
0.5 & 1.5 & 2.0 & 1.333 & 0.666 & \textbf{0.881} & 0.833 \\
0.5 & 3.0 & 0.5 & 1.667 & -0.168 & \textbf{1.206} & 1.167 \\
0.5 & 3.0 & 2.0 & 1.667 & 0.332 & \textbf{1.199} & 1.167 \\
\hline
0.6 & 1.5 & 0.5 & 1.333 & -0.334 & \textbf{0.817} & 0.733 \\
0.6 & 1.5 & 2.0 & 1.333 & 0.666 & \textbf{0.781} & 0.733 \\
0.6 & 3.0 & 0.5 & 1.667 & -0.168 & \textbf{1.106} & 1.067 \\
0.6 & 3.0 & 2.0 & 1.667 & 0.332 & \textbf{1.099} & 1.067 \\
\hline
0.9 & 1.5 & 0.5 & 1.333 & -0.334 & \textbf{0.517} & 0.433 \\
0.9 & 1.5 & 2.0 & 1.333 & 0.666 & \textbf{0.481} & 0.433 \\
0.9 & 3.0 & 0.5 & 1.667 & -0.168 & \textbf{0.806} & 0.767 \\
0.9 & 3.0 & 2.0 & 1.667 & 0.332 & \textbf{0.799} & 0.767 \\
\hline
\end{tabular}
\caption{Estimated orders of convergence of the L1 scheme using formula \eqref{eq:logadjusted} for various choices of $\alpha$, $p$ and $\mu$ with $T = 1$. The test function is taken to be \eqref{eq:test3}. The base of calculations is $\tau = 2^{-10}$.}
\label{tab:tab4}
\end{table}

\section{Conclusion}

In this paper, we have presented a rigorous discretization error analysis for the L1 approximation of the Caputo fractional derivative. By conducting our analysis within the framework of Muckenhoupt $A_p$ weighted Sobolev spaces, $W^{2,p}_\omega(0,T)$, we successfully circumvented the unrealistic assumption of full $C^2[0,T]$ or $W^{2,p}(0,T)$ regularity that is frequently used in the classical literature. Our results provide sharp, uniform truncation error bounds that explicitly incorporate the weight function, allowing the numerical analysis to gracefully handle the weak singularities naturally inherent to time-fractional differential equations. Furthermore, we demonstrated the flexibility of this theoretical framework by explicitly deriving convergence rates for various physically relevant singularity profiles—including initial algebraic, terminal algebraic, Jacobi, and logarithmic weights—and by establishing a global convergence bound for a representative fractional initial value problem.

Although the current framework provides robust estimates for solutions whose singular behavior can be controlled to ensure $y \in W^{2,p}_\omega(0,T)$, exact solutions to fractional partial differential equations frequently possess intermediate regularity. Future work will focus on extending these weighted error estimates to functions in fractional-order Sobolev spaces $W^{s,p}_\omega(0,T)$ with $1 < s < 2$.

\section*{Acknowledgment}
This research was supported by the National Science Centre, Poland (NCN) under the grant Sonata Bis with a number NCN 2020/38/E/ST1/00153.

\bibliographystyle{plain}
\bibliography{bibliography.bib}
\end{document}